\newcommand\TT{\rule{0pt}{5ex}}
\newcommand\T{\rule{0pt}{3ex}}
\newcommand\BB{\rule[-3ex]{0pt}{0pt}}
\newtheorem{theorem}{Theorem}
\newtheorem{example}{Example}
\newtheorem{proposition}[theorem]{Proposition}
\newenvironment{proof}[1][Proof]{\begin{trivlist}
\item[\hskip \labelsep {\bfseries #1}]}{\end{trivlist}}
\begin{document}

\title{Computing nonsymmetric and interpolation Macdonald polynomials}
\author{W. Baratta \\
	Department of Mathematics, University of Melbourne}
	\maketitle

\begin{abstract}
In this paper we present a Mathematica notebook for computing nonsymmetric and interpolation Macdonald polynomials. We present the new recursive generation algorithm employed within the notebook and the theory required for its development. We detail the contents of the notebook and conclude with a couple of applications of the notebook.
\end{abstract}

\section{Introduction\label{introduction}}

The nonsymmetric Macdonald polynomial $E_\eta(z;q,t)$ and its generalisation - the interpolation Macdonald polynomial $E^*_\eta(z;q,t)$ - have found applications in mathematical physics, combinatorics and representation theory \cite{haiman, haglund, bernevig, kakei}.

A feature of the aforementioned polynomials is that they allow for explicit computation. Having this knowledge provides an opportunity to experimentally seek new properties and to check analytic work. Within this paper a computer software program that was developed for these purposes is detailed.

The nonsymmetric and interpolation Macdonald polynomials are generated via recursive generation formulas. This paper begins with the theory required to introduce these formulas (Section \ref{foundation}) and then later shows how these formulas can be used to construct a recursive generation algorithm for the generation of the nonsymmetric and interpolation polynomials (Proposition \ref{recgenpoly}). Section \ref{Mathematicanotebook} details the functions within the Mathematica notebook containing this algorithm and the paper concludes with two possible applications of the notebook.

\section{Foundation Theory}\label{foundation}
\subsection{Compositions}
The polynomials discussed in this paper are labelled by compositions. We define a composition to be an $n$-tuple $\eta:=(\eta_1,\ldots,\eta_n)$ of non-negative integers. Each $\eta_i$ is called a component, the length of a composition is the number of components it contains and the sum of the components is called the modulus and denoted by $|\eta|$. Each composition $\eta$ corresponds to a unique partition $\eta^+$ obtained by rearranging the components of $\eta$ so that they are nondecreasing.

Two important operators that act on compositions are the switching and raising operators. We have the switching operator $s_i$ acting on compositions according to $$
s_i(\eta_1,\ldots, \eta_i,\eta_{i+1},\ldots,\eta_n):=(\eta_1,\ldots,\eta_{i+1},\eta_i,\ldots,\eta_n),\hspace{1cm}i=1,\ldots,n-1
$$
and the raising operator $\Phi$ which has the action
\begin{equation}\label{raising}
\Phi \eta:=(\eta_2,\ldots,\eta_n,\eta_1+1).
\end{equation}

It can be seen that every composition $\eta$ can be recursively generated from the all zero composition $(0,\ldots,0)$. In Section \ref{recgencompsection} we construct an algorithm to recursively generate any composition from $\eta$ from $(0,\ldots,0)$ using the least number of operators. It is this algorithm that allows us to obtain our main result, the recursive generation algorithm for the nonsymmetric and interpolation polynomials (Proposition \ref{recgenpoly}).


We now proceed to the required polynomial theory.

\subsection{Polynomial theory}

The nonsymmetric Macdonald polynomial \cite{cherednik} and interpolation Macdonald polynomials \cite{knop} are most commonly defined by their eigenfunction and vanishing properties, respectively. Here we take an alternative approach and define them via their respective recursive generation operators.

In the previous section we noted that every composition $\eta$ can be recursively generated from $(0,\ldots,0)$ using the operators $s_i$ and $\Phi$. In a similar way the nonsymmetric Macdonald polynomial $E_\eta(z;q,t):=E_\eta(z)$ and the interpolation Macdonald polynomial $E^*_\eta(z;q,t):=E_\eta^*(z)$ can be recursively generated from $E_{(0,\ldots,0)}(z;q,t)=1$ and $E_{(0,\ldots,0)}^*(z;q,t)=1$, respectively, using two elementary operators. We begin by introducing the switching-type operators for the two polynomials; namely the Demazure-Lustig operator $T_i$ for the nonsymmetric polynomials and the Hecke operator $H_i$ for the interpolation polynomials. These operators relate $E_\eta(z)$ and $E_{s_i\eta}(z)$, and $E_\eta^*(z)$ and $E^*_{s_i\eta}(z)$, respectively.

Each switching-type operator is a realisation of the type-$A$ Hecke algebra, an associative unitaal algebra over $\mathbb{Q}(t)$ generated by elements $h_1,\ldots,h_{n-1}$ and subject to the relations
\begin{align}
h_{i}h_{i+1}h_{i}&=h_{i+1}h_{i}h_{i+1} , \hspace{1cm}\text{for }
1\leq i\leq n-2 \notag\\
h_{i}h_{j}&=h_{j}h_{i},\text{ }\left\vert i-j\right\vert >1\text{ }\label{hecke algebra} \\
( h_{i}+1) ( h_{i}-t) &=0.  \notag
\end{align}

The switching type operators are defined by
\begin{equation}\label{TiHi}
T_i:=t+\frac{tz_i-z_{i+1}}{z_i-z_{i+1}}(s_i-1)\hspace{0.5cm} \text{   and   } \hspace{0.5cm} H_i:=t+\frac{z_i-tz_{i+1}}{z_i-z_{i+1}}(s_i-1),
\end{equation}
where here, the switching operator $s_i$ is defined to act on functions according to
$$
(s_i f)(z_1,\ldots,z_i,z_{i+1},\ldots,z_n):=f(z_1,\ldots,z_{i+1},z_{i},\ldots,z_n).
$$

These operators act on their respective polynomials according to \cite{mimachi}
\begin{equation}
T_{i}E_{\eta }(z)=\left\{
\begin{tabular}{ll}
$ \frac{t-1}{1-\delta _{i,\eta }^{-1}( q,t) }
E_{\eta }(z)+tE_{s_{i}\eta }(z)$, & $\eta _{i}<\eta _{i+1}$ \\
$tE_{\eta }(z)$, & $\eta _{i}=\eta _{i+1}$ \\
$ \frac{t-1}{1-\delta _{i,\eta }^{-1}( q,t) }
E_{\eta }(z)+\frac{( 1-t\delta _{i,\eta }( q,t) ) (
1-t^{-1}\delta _{i,\eta }( q,t) ) }{( 1-\delta
_{i,\eta }( q,t) ) ^{2}}E_{s_{i}\eta }(z)$, & $\eta _{i}>\eta
_{i+1}$,
\end{tabular}
\right.  \label{TiEn}
\end{equation}
and \cite{sahiinterpolation,knop}
\begin{align}
H_{i}E_{\eta }^{\ast }( z) =&\left\{
\begin{tabular}{ll}
$\frac{t-1}{1-\delta _{i,\eta }^{-1}( q,t) }E_{\eta }^{\ast
}( z) +E_{s_{i}\eta }^{\ast }( z) $, & $\eta _{i}<\eta
_{i+1}$ \\
$tE_{\eta }^{\ast }( z) $ ,& $\eta _{i}=\eta _{i+1}$ \\
$\frac{t-1}{1-\delta _{i,\eta }^{-1}( q,t) }E_{\eta }^{\ast
}( z) +\frac{( 1-t\delta _{i,\eta
}( q,t)) (t- \delta _{i,\eta }( q,t)) }{( 1-\delta
_{i,\eta }( q,t)) ^{2}}E_{s_{i}\eta }^{\ast
}( z) $, & $\eta _{i}>\eta _{i+1}$.
\end{tabular}
\right.  \label{hecke on interpolation}
\end{align}
In (\ref{TiEn}) and (\ref{hecke on interpolation}) $\delta _{i,\eta }( q,t) :=\overline{\eta }_i/\overline{
\eta} _{i+1}$, with
$$
\overline{\eta}_i:=q^{\eta_i}t^{-l'_\eta(i)},
$$
where
$$
l_\eta'(i):=\#\{j<i;\eta_j\geq \eta_i\}+\#\{j>i;\eta_j>\eta_i\}.
$$
The raising type-operators $\Phi_q$ and $\Phi_q^*$ of the nonsymmetric and interpolation polynomials, respectively, are given by \cite{peter3}
\begin{equation}
\Phi_q:=z_nT_{n-1}^{-1}\ldots T_1^{-1}=t_{i-n}T_{n-1}\ldots T_i z_i T_{i-1}^{-1}\ldots T_i^{-1} \label{NSRaising}
\end{equation}
and
\begin{equation}\label{IntRaising}
\Phi_q^*:=(z_n-t^{-n+1})\Delta.
\end{equation}
In (\ref{NSRaising}) the operator $T_i^{-1}$ is related to $T_i$ by the quadratic relation in (\ref{hecke algebra}) and given explicitly by
$$
T_i^{-1}:=t^{-1}-1+t^{-1}T_i,
$$
and in (\ref{IntRaising}) $\Delta f(z_1,\ldots,z_n):=f(z_n/q,z_1,\ldots,z_{n-1})$.

The raising type operators act on the Macdonald polynomials according to \cite{peter3}
$$
\Phi_q E_\eta(z)=t^{-\#\{i>1;\eta_i\geq \eta_1\}}E_{\Phi \eta}(z)
$$
and \cite{knop}
$$
\Phi_q^*E_\eta^*(z)=q^{-\eta_1}E^*_{\Phi\eta}(z).
$$

In Section \ref{Generating polynomials} we provide details of the algorithm that implements these formulas to recursively generate any nonsymmetric or interpolation Macdonald polynomial.

Below are some examples on nonsymmetric and interpolation polynomials, for further details on the polynomials we refer the reader to \cite{barattathesis}.

\begin{align*}
E_{(0,3)}(z)&=z_2^3+\tfrac{t-1}{q^2t-1}z_1^2z_2^1+\tfrac{(q+1)(t-1)}{q^2t-1}z_1^1z_2^2\\
E_{(2,1)}(z)&=z_1^2z_2^1+\tfrac{q(t-1)}{qt-1}z_1^1z_2^2\\
E_{(1,2)}(z)&=z_1^1z_2^2\\
\\
E^*_{(1,1)}(z)&=z_1z_2-\tfrac{1}{t}z_1-\tfrac{1}{t}z_2+\tfrac{1}{t^2}\\
E^*_{(1,0)}(z)&=z_1+\tfrac{t-1}{qt-1}z_2-\tfrac{q t^2-1}{t (q t-1)}\\
E^*_{(0,1)}(z)&=z_2-\tfrac{1}{t}.
\end{align*}

Before considering the recursive generation algorithm we discuss the limiting properties of the nonsymmetric and interpolation Macdonald polynomials.

\subsection{Limiting properties}

The following diagram indicates that the nonsymmetric Macdonald polynomials can be obtained from the interpolation Macdonald polynomials through a process of homogenisation, that is taking only the terms of top degree from the interpolation polynomial extracts the nonsymmetric polynomial. Furthermore, the diagram indicates that there are many additional families of polynomials that can be obtained from the nonsymmetric and interpolation Macdonald polynomials. It is this feature that allows the computational work discussed in this paper to immediately yield algorithms for the generation of these other families of polynomials.

Through symmetrisation the nonsymmetric Macdonald polynomial $E_\eta(z;q,t)$ can be reduced to symmetric Macdonald polynomial $P_{\eta^+}(z;q,t)$. Additionally, the symmetric, nonsymmetric and interpolation Macdonald polynomials reduce to the corresponding Jack polynomials, $P_{\eta^+}(z;\alpha)$, $E_\eta(z;\alpha)$, $E_\eta^*(z;\alpha)$, in the limit $t=q^{1/\alpha}$, $q\rightarrow 1$, which themselves reduce to the Schur and zonal polynomials, $s_{\eta^+}(z)$, $Z_{\eta^+}(z)$, by setting $\alpha$ to $1$ and $2$, respectively. Lastly, we have that the symmetric Macdonald polynomials reduce to the Hall-Littlewood polynomials $P_{\eta^+}(z;t)$ in the limit $q=0$.

\begin{center}
\begin{tikzpicture}[description/.style={fill=white,inner sep=2pt}]
\matrix (m) [matrix of math nodes, row sep=2.5em,
column sep=3em, text height=2ex, text depth=0.25ex]
{&&  &E^*_\eta(z;q^{-1},t^{-1})&\\
&& E_\eta(z;q,t) &&E^*_\eta(z;\alpha)\\
& P_{\eta^+}(z;q,t) & & E_\eta(z;\alpha)\\
 P_{\eta^+}(z;t)&  &P_{\eta^+}(z;\alpha)& &\\
& s_{\eta^+}(z) & &Z_{\eta^+}(z) & \\};
\path[->,font=\scriptsize]
(m-1-4) edge node[auto] { homogenise} (m-2-3)
(m-2-5) edge node[auto] { homogenise} (m-3-4)
(m-2-3) edge node[auto] { symmetrise} (m-3-2)
(m-3-4) edge node[auto] { symmetrise} (m-4-3)
(m-1-4) edge node[auto] { $t=q^{1/\alpha},q\rightarrow1$} (m-2-5)
(m-2-3) edge node[auto] { $t=q^{1/\alpha},q\rightarrow1$} (m-3-4)
(m-3-2) edge node[auto] { $t=q^{1/\alpha},q\rightarrow1$} (m-4-3)
(m-3-2) edge node[auto] { $q=0$} (m-4-1)
(m-4-1) edge node[auto] { $t=0$} (m-5-2)
(m-4-3) edge node[auto] { $\alpha=1$} (m-5-2)
(m-4-3) edge node[auto] {$ \alpha=2$ } (m-5-4);
\end{tikzpicture}
\end{center}

The definitive reference for the symmetric Macdonald polynomials is \cite{macdonald}. This book also contains details on the Schur and Hall-Littlewood polynomials. An accessible reference for the Jack polynomials and zonal polynomials are \cite{forresterbook} and \cite{muirhead}, respectively. For details on how the symmetric Macdonald polynomials can be obtained from the nonsymmetric polynomials we refer the reader to \cite{marshallmacdonald}.

\section{Computer Programming}

\subsection{A brief history}

Computer software that generates the polynomial families listed above is quite limited. One of the few programs available is designed to compute symmetric Jack polynomials \cite{mops}. The software, written in Maple, uses recurrence formulas to evaluate the coefficients $K'_{\kappa\mu}(\alpha)\in\mathbb{Q}(\alpha)$ in the expansion
\begin{equation*}\label{CJack}
P_{\eta^+}(z;\alpha)=\sum_{\lambda^+\leq\eta^+}K'_{\lambda^+\eta^+}(\alpha)m_{\lambda^+}(z),
\end{equation*}
where the $m_{\lambda^+}(z)$ are  symmetric monomial functions (see for example \cite{macdonald}). The study in \cite{mops} discusses the implementation of the software and the run times of particular functions. They also provide details for the generation of the generalised classical Hermite, Laguerre and Jacobi polynomials.

A further study in the generation of symmetric Jack polynomials is by Demmel and Koev \cite{koev}. They use the expansion formula for a Jack polynomial in one of its variables to obtain a more efficient evaluation for sets of Jack polynomials than was known previously.

Although there are known methods for generating the nonsymmetric and interpolation Macdonald polynomials, for example the Rodrigues formulas \cite{nishinobc}, software that generates nonsymmetric and interpolation polynomials appears to be nonexistent in the literature. It is our aim to initiate momentum in this area.

We note that the methods presented here would be dramatically improved in efficiency if the algorithms introduced by Demmel and Koev were generalisable to nonsymmetric theory. Although progress has been made using computer-generated coefficients, confirming the ability to generalise the algorithms, the formulas cannot be run efficiently until the nonsymmetric equivalent of equation (6.2) of \cite{koev} --- a type of dual Pieri formula --- is known.

We now present the algorithms implemented in our Mathematica notebook.

\subsection{Recursively generating compositions}\label{recgencompsection}
As stated earlier every composition $\eta$ can be recursively generated from the all zero composition $(0,\ldots,0)$ using a (non-unique) sequence of raising and switching operators. For example, $\eta=(0,2,1)$ can be generated from $(0,0,0)$ by $s_1s_2s_1s_2 \Phi s_2 \Phi s_2 \Phi (0,0,0)$ or more efficiently by $s_2 \Phi s_1 \Phi \Phi(0,0,0)$. In polynomial classes for which switching and raising operators exist analogous methods can be used to generate polynomials. Consequently once we construct an algorithm that generates any composition recursively we automatically obtain an algorithm for the polynomials.

Our aim is to construct an algorithm to recursively generate any composition $\eta$ from $(0,\ldots,0)$ using the least number of operators. We first observe that the raising operator $\Phi$ must be used $|\eta|$ times. Since the raising operator acts on a composition by increasing the value of the component in the first position by one, appending it to the end of the composition and shifting each other component back one position, to minimise the number of operators we must always increase the value of the leftmost component requiring raising. A systematic way of doing this is to apply the raising operator to build all components greater or equal to a specific size only using the switchings to move the leftmost component needing raising to the first position. We note that this method is quite similar to the Rodrigues formulas construction \cite{nishinobc}. Using this method we naturally construct the composition $(\eta^+)^R$, where
$$
\eta^R:=(\eta_n,\eta_{n-1},\ldots,\eta_1).
$$ Due to the nature of $\Phi$ there is no possible way to construct a composition containing each component of $\eta$ using fewer operators.

To reorder $(\eta^+)^R$ minimally we switch each component into its correct position beginning with either $\eta_1,\eta_2,\ldots$ or $\eta_n,\eta_{n-1},\ldots$. We choose to start with repositioning $\eta_n$. By always choosing the closest component of the unordered composition we ensure that the number of switches is minimal.

\begin{proposition}\label{recgencomp}
Define $l_{\eta,i}:=\#\{\eta_j<i\}$, $g_{\eta,i}:=\#\{\eta_j\geq i\}$ and
\begin{equation*}
\sigma(\eta,i):=(l_{\eta,i-1},\ldots,1,l_{\eta,i-1}+1,\ldots,2,\ldots,l_{\eta,i-1}+g_{\eta,i}-1,\ldots,g_{\eta,i}).
\end{equation*}
Define
\begin{equation*}
r_{\eta,i}:=\left\{
\begin{tabular}{ll}
$\Phi^{g_{\eta,i}}$, & $i=1$\\
$\Phi^{g_{\eta,i}}s_{\sigma(\eta,i)}$, & $i>1,$
\end{tabular}
\right.
\end{equation*}
where $s_{(i_l,\ldots,i_1)}:=s_{i_l}\ldots s_{i_1}$. Define
\begin{equation*}
p_{\eta,i}:=\max\Big\{j  \leq i:\prod_{k=1}^{i-1}s_{\sigma'(\eta,k)}(\eta^+)^R_j=\eta_i\Big\}
\end{equation*}
and
$$s_{\sigma'(\eta,i)}:=\left\{
\begin{tabular}{ll}
$s_{(p_{\eta,i},\ldots,i-1)}$, & $p_{\eta,i}<i$\\
$1$ ,& $p_{\eta,i}=i$.
\end{tabular}
\right.
$$ The minimal length sequence of operators that transforms $(0,\ldots,0)$ to $\eta$ is $$s_{\sigma'(\eta,2)}\ldots s_{\sigma'(\eta,n)}r_{\eta,\max(\eta)}\ldots r_{\eta,1}.$$ That is
\begin{equation}\label{recgencompmainresult}
s_{\sigma'(\eta,2)}\ldots s_{\sigma'(\eta,n)}r_{\eta,\max(\eta)}\ldots r_{\eta,1}(0,\ldots,0)=\eta.
\end{equation}
\end{proposition}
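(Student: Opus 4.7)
The plan is to decompose the statement into two essentially independent claims — first, that the displayed product of operators applied to $(0,\ldots,0)$ really produces $\eta$, and second, that no sequence of operators of strictly smaller length produces $\eta$ — and to handle each in turn, leveraging the clean separation of the sequence into a \emph{build} phase comprised of the operators $r_{\eta,i}$ and a subsequent \emph{reorder} phase comprised of the operators $s_{\sigma'(\eta,i)}$ that is already informally described in the discussion preceding the proposition.

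For correctness, I would proceed by a double induction. The outer induction handles the build phase: running $i$ from $\max(\eta)$ down to $1$, I would show that the partial product $r_{\eta,i}\cdots r_{\eta,\max(\eta)}(0,\ldots,0)$ is the composition whose components of value at least $i$ occupy the tail in the prescribed order, with zeros elsewhere. The inductive step relies on the fact that $s_{\sigma(\eta,i)}$ is exactly the sequence of adjacent transpositions that shifts the leftmost component awaiting further raising into position $1$ in the correct order, so that the subsequent $\Phi^{g_{\eta,i}}$ raises each such component by one and cycles it to the tail. Once the outer induction terminates at $i=1$, the build phase has produced $(\eta^+)^R$. The second induction handles the reorder phase: for $i=n,n-1,\ldots,2$ I would show that $s_{\sigma'(\eta,i)}$ places the value $\eta_i$ into position $i$ by a contiguous bubble of transpositions, without disturbing positions $i+1,\ldots,n$ already set correctly. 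The definition of $p_{\eta,i}$ as the largest $j\leq i$ currently carrying the value $\eta_i$ is exactly what makes such a non-disturbing bubble possible.

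For minimality, the argument splits into two pieces. The $\Phi$-count is forced: since each $\Phi$ increases the modulus by $1$ while each $s_j$ preserves it, any valid sequence must contain at least $|\eta|=\sum_{i\geq 1}g_{\eta,i}$ applications of $\Phi$, which is exactly the count used here. The switching count is more delicate: I would introduce a combinatorial lower bound — essentially, a length-in-the-symmetric-group statistic tracking, for each intermediate composition, the minimum number of transpositions still needed to reach $\eta$ via any legal continuation — and verify that every operator $\Phi$ or $s_j$ decreases this statistic by at most one, while our algorithm decreases it by exactly one at every step. The switches packaged in $\sigma(\eta,i)$ and $\sigma'(\eta,i)$ are each a selection-sort pattern that provably attains the minimum number of transpositions for the permutation they effect at that stage.

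The main obstacle will be the minimality of the switchings, not correctness. Showing the $\Phi$-count is tight and carrying out the phase-by-phase counting are essentially bookkeeping, but ruling out clever interleavings — sequences intermixing $\Phi$ and $s_j$ in ways not captured by the clean two-phase split we use — requires identifying the right monotone invariant and verifying its behaviour under $\Phi$. That verification is the heart of the proof: raising alters positions globally rather than locally, so showing it cannot create shortcuts relative to pure switching sequences needs care.
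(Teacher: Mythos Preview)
Your correctness argument is essentially the paper's: a two-phase induction first showing the $r_{\eta,i}$ build $(\eta^+)^R$, then showing the $s_{\sigma'(\eta,i)}$ reorder it to $\eta$. (One bookkeeping slip: the operators act right-to-left, so $r_{\eta,1}$ is applied first and the natural partial product is $r_{\eta,k}\cdots r_{\eta,1}(0,\ldots,0)$ with $k$ increasing, not $r_{\eta,i}\cdots r_{\eta,\max(\eta)}$ with $i$ decreasing. The invariant you describe — ``components of value at least $i$ occupy the tail'' — is the right shape once the direction is fixed.)

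Where your plan and the paper diverge is minimality. You correctly identify this as the hard part and propose a genuine strategy: the $\Phi$-count is forced by the modulus, and for the switch-count you would construct a monotone statistic that any operator decreases by at most one, then check the algorithm saturates it. The paper does not do this. Its proof of minimality is a one-sentence assertion that one ``cannot generate a composition with components $\eta_1,\ldots,\eta_n$ more economically'' and that ``the permutation that places each component into its correct position can not be improved either''; immediately after the proof the paper even concedes that the minimality of the reordering phase is supported only by computational comparison with $\omega_\eta\omega_{(\eta^+)^R}^{-1}$. So your proposal is more ambitious than the paper's on this point: if you can actually produce the invariant and verify its behaviour under $\Phi$ (the step you flag as delicate), you will have proved something the paper only claims heuristically.
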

\begin{proof}
We prove (\ref{recgencompmainresult}) in two steps. We first show by induction that
\begin{equation}\label{firststeprecgen}
r_{\eta,\max(\eta)}\ldots r_{\eta,1}(0,\ldots,0)=(\eta^+)^R.
\end{equation}
By the definition of $r_{\eta,1}$ it is clear that $r_{\eta,1}(0,\ldots,0)$ produces a composition of the form $(0,\ldots,0,1,\ldots,1)$ where the number of $1$'s is equal to the number of components of $\eta$ that are greater or equal to $1$. Suppose before applying $r_{\eta,k+1}$ we have generated the correct number of components with value $0,1,\ldots,k-1$, that is we have constructed a composition of the form $((\eta^+)^R_1,\ldots,(\eta^+)^R_j,k\ldots,k)$ where the number of $k$'s is equal to the number of components of $\eta$ that are greater or equal to $k$. Quite obviously $r_{\eta,k+1}((\eta^+)^R_1,\ldots,(\eta^+)^R_j,k\ldots,k)=((\eta^+)^R_1,\ldots,(\eta^+)^R_j,k\ldots,k,k+1,\ldots,k+1)$ where the number of $k$'s equals the number of components of $\eta$ equal to $k$ and the number of $(k+1)$'s equals the number of components greater than or equal to $k+1$. By induction (\ref{firststeprecgen}) holds. The final task is to show that
\begin{equation}\label{2ndsteprecgen}
s_{\sigma'(\eta,2)}\ldots s_{\sigma'(\eta,n)}(\eta^+)^R=\eta.
\end{equation}
This result follows immediately from the definition of $p_{\eta,i}$ as quite clearly $\sigma'(\eta,i)$ successively permutes each $\eta_i$ into the correct position. The fact that the total sequence of operators is of minimal length follows from the action of $\Phi$ and the inability to generate a composition with components $\eta_1,\ldots,\eta_n$ more economically than what is specified by (\ref{firststeprecgen}), the permutation that places each component into its correct position can not be improved either.
\hfill $\square$
\end{proof}

We note that further evidence showing the permutation in (\ref{2ndsteprecgen}) is minimal is the comparison of its length to the minimal permutation $\omega_\eta \omega_{(\eta^+)^R}^{-1}$, where $\omega_\eta$ is the minimal length permutation such that $\omega_\eta^{-1}(\eta)=\eta^+$. Due to the different structures of the permutations we use the computational evidence to support our claim.

To provide additional clarity to the algorithm we show how $(4,1,2,1)$ is generated using the above operators.
\begin{example}\label{example}
We construct the composition $(4,1,2,1)$ recursively from $(0,0,0,0)$. We first construct $(1,1,2,4)$ using the operators $r_{\eta,i}$.
\begin{align*}
r_{\eta,4}r_{\eta,3}r_{\eta,2}r_{\eta,1}(0,0,0,0)&=r_{\eta,4}r_{\eta,3}r_{\eta,2}\Phi^4(0,0,0,0)\\
&=r_{\eta,4}r_{\eta,3}r_{\eta,2}(1,1,1,1)\\
&=r_{\eta,4}r_{\eta,3}\Phi^2(1,1,1,1)\\
&=r_{\eta,4}r_{\eta,3}(1,1,2,2)\\
&=r_{\eta,4}\Phi s_1s_2(1,1,2,2)\\
&=r_{\eta,4}(1,1,2,3)\\
&=\Phi s_1s_2s_3(1,1,2,3)\\
&=(1,1,2,4).
\end{align*}
We complete the generation by permuting each component into its correct position
\begin{align*}
s_{\sigma'(\eta,2)} s_{\sigma'(\eta,3)} s_{\sigma'(\eta,4)}(1,1,2,4)&= s_{\sigma'(\eta,2)} s_{\sigma'(\eta,3)} s_3s_2(1,1,2,4) \\
&= s_{\sigma'(\eta,2)} s_{\sigma'(\eta,3)}(1,2,4,1) \\
&=s_{\sigma'(\eta,2)}s_2(1,2,4,1) \\
&=s_{\sigma'(\eta,2)}(1,4,2,1) \\
&= s_1(1,4,2,1) \\
&=(4,1,2,1).
\end{align*}
\end{example}

We now move onto the major goal of the paper, developing the recursive generation algorithm for the nonsymmetric and interpolation Macdonald polynomials.

\subsection{Recursively generating polynomials}\label{Generating polynomials}

Beyond the nonsymmetric and interpolation Macdonald polynomials there are many other families of polynomials that can be recursively generated via switching and raising type operators. Some examples include nonsymmetric and interpolation Jack polynomials (see e.g. \cite[Chap. 12]{forresterbook}) and the generalised nonsymmetric Hermite and Laguerre polynomials (see e.g. \cite[Chap. 13]{forresterbook}). In this section we show how the algorithm developed in Proposition \ref{recgencomp} can be employed to recursively generate any of the polynomials in these families.

For simplicity in this section we use $F_\eta(z)$ to denote any of the aforementioned families of polynomials. We show how the recursive generation algorithm works in the general setting and then provide a specific example for the nonsymmetric Macdonald polynomials.

To most simply express the sequence of operators required to recursively generate a composition according to Proposition \ref{recgencomp} we use the numbers $1,\ldots, n-1$ to represent the allowable switching operators, $0$ to represent the raising operator and denote the required sequence by ${\rm R}(\eta)$. For Example \ref{example} in the previous section we observe that $${\rm R}((4,1,2,1))=\{0, 0, 0, 0, 0, 0, 2, 1, 0, 3, 2, 1, 0, 2, 3, 2, 1 \}.$$

\begin{proposition}\label{recgenpoly}
Define
\begin{equation}\label{recursivegenop}
{\rm RG}_j({F_{{\eta(j)}}(z),\eta(j)},R(\eta)_j):=\left\{
\begin{tabular}{ll}
$\{F_{s_i {\eta(j)}}(z), s_i {\eta(j)}\},$ & $i=R(\eta)_j=1,\ldots,n-1$\\
$\{F_{\Phi {\eta(j)}}(z), \Phi {\eta(j)}\},$ & $i=R(\eta)_j=0,$
\end{tabular}
\right.
\end{equation}
where $\eta(j)$ represents the composition obtained after $j$ transformations from $(0,\ldots,0)$ to $\eta$ specified by ${\rm R}(\eta)$ and $F_{s_i\eta}(z)$ and $F_{\Phi\eta}(z)$ are obtained from $F_\eta(z)$ using known formulas. With initial input ${\rm RG}_1(1,(0,\ldots,0),{\rm R}(\eta)_1)$ and each subsequent polynomial derived from the previous by entering the newly obtained polynomial and composition along with the next number in ${\rm R}(\eta)$ in ${\rm RG}$ the polynomial $F_\eta(z)$ will be obtained after $|R(\eta)|$ steps.
\end{proposition}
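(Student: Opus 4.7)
The plan is to prove this by induction on the step index $j$, leveraging Proposition \ref{recgencomp} for the compositional part and the explicit action formulas (\ref{TiEn}), (\ref{hecke on interpolation}) and the raising operator identities for the polynomial part. The statement essentially combines two assertions: first, that the sequence of compositions $\eta(1),\eta(2),\ldots$ produced by iterating $\mathrm{RG}$ coincides with the orbit of $(0,\ldots,0)$ under the minimal operator word $R(\eta)$ described in Proposition \ref{recgencomp}, and second, that at each step the required polynomial $F_{s_i\eta(j)}(z)$ or $F_{\Phi\eta(j)}(z)$ can actually be extracted from the known operator action on $F_{\eta(j)}(z)$.

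The first assertion is immediate from the definition: by construction, applying $\mathrm{RG}_j$ updates the composition component of the state exactly by the operator prescribed by $R(\eta)_j$, so after $|R(\eta)|$ steps Proposition \ref{recgencomp} guarantees $\eta(|R(\eta)|)=\eta$. For the second assertion I would handle the two branches separately. When $R(\eta)_j = i \in \{1,\ldots,n-1\}$ and $F$ is the nonsymmetric Macdonald polynomial, the relation (\ref{TiEn}) expresses $T_i E_{\eta(j)}(z)$ as a linear combination of $E_{\eta(j)}(z)$ and $E_{s_i\eta(j)}(z)$; one solves for $E_{s_i\eta(j)}(z)$ by transposing the $E_{\eta(j)}(z)$ term and dividing by the leading coefficient on $E_{s_i\eta(j)}(z)$. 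The same argument works for the interpolation polynomials using (\ref{hecke on interpolation}). When $R(\eta)_j = 0$, the cited raising formulas $\Phi_q E_\eta(z) = t^{-\#\{i>1:\eta_i\geq\eta_1\}}E_{\Phi\eta}(z)$ and $\Phi_q^* E_\eta^*(z)=q^{-\eta_1}E_{\Phi\eta}^*(z)$ are directly solvable for $E_{\Phi\eta(j)}(z)$ and $E_{\Phi\eta(j)}^*(z)$, respectively.

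The main subtlety, and where I would spend the bulk of the argument, is justifying that these inversions are always well-defined along the particular path $R(\eta)$ chosen by Proposition \ref{recgencomp}. The switching step requires a nonzero coefficient on $E_{s_i\eta(j)}(z)$ in (\ref{TiEn}) or (\ref{hecke on interpolation}), which in both nontrivial cases reduces to $\delta_{i,\eta(j)}(q,t)\neq 1$ (so that the denominators appearing are nonzero) and to $\delta_{i,\eta(j)}(q,t)\neq t^{\pm 1}$ in the $\eta_i>\eta_{i+1}$ branch. Because the algorithm of Proposition \ref{recgencomp} never applies $s_i$ to a composition with $\eta(j)_i=\eta(j)_{i+1}$ (such a move would not change $\eta(j)$ and hence would never appear in a minimal-length word), the relevant branch of (\ref{TiEn})/(\ref{hecke on interpolation}) always has the $E_{s_i\eta(j)}(z)$ coefficient given by the $\eta_i<\eta_{i+1}$ or $\eta_i>\eta_{i+1}$ expression, which is a nonzero rational function of $q$ and $t$ by inspection of the formulas.

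Combining the two ingredients, a routine induction on $j$ shows that after executing ${\rm RG}_1,\ldots,{\rm RG}_{|R(\eta)|}$ starting from $\{F_{(0,\ldots,0)}(z),(0,\ldots,0)\}=\{1,(0,\ldots,0)\}$, the second coordinate is $\eta$ and the first is $F_\eta(z)$, proving the proposition. I expect the only real obstacle to be the bookkeeping of nonvanishing denominators in the switching step; once one observes that the minimality of $R(\eta)$ excludes the $\eta_i=\eta_{i+1}$ case, the argument reduces to elementary manipulations of the displayed formulas, with the generalisation to other families $F_\eta(z)$ (Jack, nonsymmetric Hermite, Laguerre, etc.) following by the same pattern since each possesses analogous switching/raising actions.
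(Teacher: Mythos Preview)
Your proposal is correct and follows essentially the same approach as the paper: invoke Proposition~\ref{recgencomp} to guarantee that the composition component reaches $\eta$, and then use the known switching and raising formulas to carry the polynomial along in parallel. The paper's own proof consists of only two sentences to this effect, so your version---with its explicit induction on $j$ and the check that the switching inversions are well-defined because a minimal word $R(\eta)$ never applies $s_i$ at equal components---is in fact more thorough than what appears in the paper.
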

\begin{proof}
By Proposition \ref{recgencomp} we know that the sequence specified by $R(\eta)$ will recursively generate the composition $\eta$ from $(0,\ldots,0)$. Consequently $RG_j$ will recursively generate $F_\eta(z)$ from $F_{(0,\ldots,0)}(z)$.
\hfill $\square$
\end{proof}

We note that we keep track of the composition labelling the polynomial at each stage due to requirements of the formulas transforming $F_\eta(z)$ to $F_{s_i\eta}(z)$ and $F_{\Phi\eta}(z)$.

\begin{example}
To recursively generate a nonsymmetric Macdonald polynomial using the methods in the previous proposition we begin by rewriting the recursive generation formulas for the nonsymmetric Macdonald polynomials as
\begin{align}
E_{s_{i}\eta }(z)=&\left\{
\begin{tabular}{ll}
$t^{-1}T_{i}E_{\eta }(z)- \frac{t-1}{t(1-\delta _{i,\eta }^{-1}( q,t)) }
E_{\eta }(z)$, & $\eta _{i}<\eta _{i+1}$ \\
$E_{\eta }(z)$, & $\eta _{i}=\eta _{i+1}$ \\
$\frac{ ( 1-\delta
_{i,\eta }( q,t) ) ^{2}}{( 1-t\delta _{i,\eta }( q,t) ) (
1-t^{-1}\delta _{i,\eta }( q,t) )} \Big(T_iE_{\eta }(z) \frac{t-1}{1-\delta _{i,\eta }^{-1}( q,t) }
E_{\eta }(z)\Big)$, & $\eta _{i}>\eta
_{i+1}$,
\end{tabular}
\right.  \notag \\
E_{\Phi \eta } (z)=&t^{\#\{i>1:\eta_i\leq \eta_1\} }\Phi _{q}E_{\eta }(z). \notag
\end{align}
To generate the polynomial $E_{(2,1)}(z;q,t)$ from $E_{(0,0)}(z;q,t)=1$ using RG we first compute R$((2,1))$ using Proposition \ref{recgencomp} to be R$(2,1)=\{0,0,0,1\}$ we then proceed recursively
\begin{align*}
{\rm RG}_1(1,(0,0),0)&=\{E_{(0,1)}(z),(0,1)\} \\
{\rm RG}_2(E_{(0,1)}(z),(0,1),0)&=\{E_{(1,1)}(z),(1,1)\} \\
{\rm RG}_3(E_{(1,1)}(z),(1,1),0)&=\{E_{(1,2)}(z),(1,2)\} \\
{\rm RG}_4(E_{(1,2)}(z),(1,2),1)&=\{E_{(2,1)}(z),(2,1)\}. \\
\end{align*}
\end{example}

In the cases where $F_\eta(z)$ is homogeneous, for example the nonsymmetric Macdonald and Jack polynomials, we can greatly reduce the number of operators required to generate polynomials labelled by compositions with no zero components by making use of the relationship \cite{marshallthesis}
\begin{equation}\label{makequicker}
F_{\eta+(k^n)}(z)=(z_1\ldots z_n)^kF_\eta(z).
\end{equation}
This result allows us to omit the first $n\times \min\{\eta\}$ zeros from ${\rm R}(\eta)$, forming a new set R$'(\eta)$, and begin our recursive process with $$\{(z_1\ldots z_n)^{\min\{\eta\}}, (\min\{ \eta\},\ldots,\min\{ \eta\}),{\rm R}'(\eta)_{1}\}$$ rather than $\{1,(0,\ldots,0),{\rm R}(\eta)_1\}$.

\begin{example}
Using (\ref{makequicker}) we recursively generate $E_{(2,1)}(z;q,t)$ from $E_{(1,1)}(z;q,t)$. With R$'((2,1))=\{0,1\}$ we obtain
\begin{align*}
{\rm RG}_1(z_1z_2,(1,1),0)&=\{E_{(1,2)}(z),(1,2)\} \\
{\rm RG}_2(E_{(1,2)}(z),(1,2),1)&=\{E_{(2,1)}(z),(2,1)\}. \\
\end{align*}
\end{example}

\subsection{Runtimes and Software}\label{Mathematicanotebook}
\subsubsection{Algorithm runtimes}

In this section we analyse the performance of the algorithm constructed in Proposition \ref{recgenpoly} when used to generate the nonsymmetric and interpolation Macdonald polynomials.

To provide a thorough analysis we must select polynomials of varying degrees of complexity. That is, varying the number of variables, the maximum degree and of these polynomials selecting those requiring the least and most number of operators to generate. Of the polynomials with $n$ variables and maximum degree $k$ those that take the least and most number of operators to generate using the algorithm of Proposition \ref{recgenpoly} are labelled by compositions $(q^{n-r},(q+1)^r)$ and $(k,0,\ldots,0)$, where $k=qn+r$, and require $k$ and $nk$ operators to generate, respectively. We note that we have employed (\ref{makequicker}) into our algorithm for the nonsymmetric Macdonald polynomials and polynomials labelled by compositions with no zero components will be generated more efficiently than what is specified above, for example the nonsymmetric Macdonald polynomials labelled by $\eta=(k^n)$ would be generated almost instantly.

Table \ref{runtimerod} shows the runtimes of the generation of different nonsymmetric Macdonald polynomials using the recursive generation algorithm of Proposition \ref{recgenpoly}. The computer these runtimes were observed on was an iMac 2.4GHz Intel core $2$ duo processor in version $7.01.0$ of Mathematica.
\begin{table}[h!]
\begin{center}
  \begin{tabular}{ c c |c c|c c  }
      \hline
      &&  \multicolumn{2}{c}{$E_\eta(z)$ }&\multicolumn{2}{|c}{$E^*_\eta(z)$  } \\
    \hline
      $|\eta|$ & $\eta$ & time& $\#$ operators & time& $\#$ operators\\ \hline
   $4$ & $(0,4)$ & 0.0308 &7& 0.7792&7 \\
      & $(1,3)$ & 0.0064 &3&  0.0759&5 \\
  & $(2,2)$ & 0.0011 &0 & 0.0009 &4\\
      & $(3,1)$ & 0.0177& 4& 0.6081 &6 \\
    & $(4,0)$ & 0.0674  &8& 12.3216 &8\\
  & $(0,0,4)$ & 0.48 &10& 4.54&10 \\
  & $(1,1,2)$ & 0.0014 &1& 0.0010&4  \\
    & $(2,1,1)$ & 0.0105 &3 & 0.3108 &6\\
        & $(4,0,0)$ & 0.6337 &12 & 65.7861&12 \\
        & $(1,3,0)$ & 0.3171 &9& 30.3733  &9\\
   \hline
  $7$ & $(0,7)$ & 0.2677&13 & 34.4630&13  \\
   & $(3,4)$ & 0.0019&1 & 0.001648&7  \\
  & $(4,3)$ & 0.0070 &2& 0.1471 &8\\
    & $(7,0)$ & 0.4744 &14& 68.2642&14 \\
   \hline

  \end{tabular}
  \caption{Runtimes in seconds and number of operators required for the recursive generation algorithm} \label{runtimerod}
\end{center}
\end{table}

We note that it is the inhomogeniety of the interpolation polynomials that makes their construction time longer than the nonsymmetric polynomials. Furthermore, we observe occasions where fewer operators result in longer runtimes, for example $E_{(0,0,4)}(z)$ compared with $E_{(1,3,0)}(z)$. This can be explained by the complexity of the polynomials generated in the recursive generation process.

\clearpage
\subsubsection{Mathematica notebook}

We now provide the details of the Mathematica notebook containing the polynomials and operators contained within this paper, and more generally the author's PhD thesis \cite{barattathesis}. The Mathematica notebook, titled SpecialFunctions.nb, can be found
at www.ms.unimelb.edu.au/~wbaratta/index.html and ran on Mathematica 7. The original purpose of the Mathematica notebook was to efficiently generate nonsymmetric and nonsymmetric interpolation Macdonald polynomials to develop an understanding of their known theory. Surpassing this motivation it was extensively used throughout the author's PhD candidature to assist with conjecture formulation and testing. We now present a table containing the key functions defined in the Mathematica notebook.
The notations used in the table are consistent with those used throughout the paper. For those extending beyond the current paper (denoted by a $^*$), we refer the reader to \cite{barattathesis}. Note, some functions are not introduced until the following section.

\begin{table}[h!]
\begin{tabular}{ll}
 {\bf Syntax} & {\bf Description} \\\hline
CompositionModulus[$\eta$]&Computes $|\eta|$\\\hline
 SwitchComposition[$\eta,i$]& Computes $s_i\eta$\\
  RaiseComposition[$\eta$]&Computes $\Phi \eta$\\
$^*$cI[$\eta,I$]&Computes $c_I(\eta)$\\\hline
 $^*$Dominance[$\eta,\lambda$]& Determines whether $\eta\leq\lambda$ or $\lambda\leq\eta$\\
 $^*$PartialOrder[$\eta,\lambda$] & Determines whether $\eta\preceq\lambda$ or $\lambda\preceq\eta$\\
  $^*$PartialOrder2[$\eta,\lambda$] & Determines whether $\eta \triangleleft\lambda$ or $\lambda \triangleleft\eta$\\
 $^*$Successor[$\eta,\lambda$]& Determines whether $\eta\preceq'\lambda$ or $\lambda\preceq'\eta$  \\\hline
  $^*$ArmLength[$\eta,i,j$] & Computes $a_\eta(i,j)$, the armlength of a composition at a square\\
  $^*$ArmCoLength[$\eta,i,j$] & Computes $a'_\eta(i,j)$, the coarmlength of a composition at a square\\
  $^*$LegLength[$\eta,i,j$] & Computes $l_\eta(i,j)$, the leglength of a composition at a square\\
    LegCoLength[$\eta,i$] & Computes $l'_\eta(i)$, the coleglength of a composition at a square\\\hline
    $^*$Md[$\eta$] & Computes $d_\eta(q,t)$\\
      $^*$MdDash[$\eta$] & Computes $d'_\eta(q,t)$\\
        $^*$Me[$\eta$] & Computes $e_\eta(q,t)$\\
          $^*$MeDash[$\eta$] & Computes $e'_\eta(q,t)$\\\hline
          R[$\eta$]& Computes the set R$(\eta)$, specifying the sequence of operators \\
          &required to generate $\eta$ from $(0,\ldots,0)$\\\hline
                  \end{tabular}
\caption{Mathematica functions relating to compositions}\label{Mathematicafunctionsc}
\end{table}

\vspace{0.5cm}

\begin{table}[h]
\begin{tabular}{ll}
 {\bf Syntax} & {\bf Description} \\\hline
PermutationOnComposition[$\sigma,\eta$]& Computes $\sigma(\eta)$\\
 DecompositionOnComposition[$\{s_{i_1},\ldots,s_{i_l}\},\eta$]& Computes $s_{i_l}\ldots s_{i_1}\eta$\\
  SwitchingOperator[$f,i,j$] & Computes $s_{ij}f(\ldots,z_i,\ldots,z_j,\ldots)$\\
  PermutationOnPolynomial[$\sigma,f$]& Computes $\sigma f(z_1,\ldots,z_n)$\\
 ShortestPermutation[$\eta$] & Computes $\omega_\eta$, the shortest \\ &permutation such that $\omega_\eta^{-1}(\eta)=\eta^+$\\
 RequiredPermutation[$\eta,\lambda$] & Computes the permutation $\sigma$ such\\ & that $\sigma(\eta)=\lambda$ \\\hline
          \end{tabular}
\caption{Mathematica functions relating to permutations}\label{Mathematicafunctionsp}
\end{table}

\begin{table}[h]
\begin{tabular}{ll}
 {\bf Syntax} & {\bf Description} \\\hline
         $^*$ Monomial[$\eta$] & Computes $z^\eta$\\
          ElementarySymmetricFunction[$r,n$]& Computes $e_r(z)$ in $n$ variables\\
          $^*$CompleteSymmetricFunction[$r,n$]& Computes $h_r(z)$ in $n$ variables\\
SymmetricMonomialFunction[$\kappa$] & Computes $m_\kappa(z)$\\\hline
$^*$Vandermonde[$n$]& Computes $\Delta(z)$\\
$^*$VandermondeJ[$J$]& Computes $\Delta^J(z)$\\
tVandermonde[$n$]& Computes $\Delta_t(z)$\\
$^*$tVandermondeJ[$J$]&Computes $\Delta^J_t(z)$\\\hline
Schur[$\kappa$] & Computes $s_\kappa(z)$\\
Zonal[$\kappa$] & Computes $Z_\kappa(z)$\\
HallLittlewood[$\kappa$] & Computes $P_\kappa(z;t)$\\\hline
\end{tabular}
\caption{Mathematica functions relating to miscellaneous polynomials}\label{MathematicafunctionsMisc}
\end{table}

\begin{table}[h]
\begin{tabular}{ll}
 {\bf Syntax} & {\bf Description} \\\hline
 SymJack[$\kappa$]& Computes $P_\kappa(z;\alpha)$\\
 NSJack[$\eta$]& Computes $E_\eta(z;\alpha)$\\
IntJack[$\eta$]& Computes $E^*_\eta(z;\alpha)$\\\hline
$^*$JEvalue[$\eta$] & Computes $\overline{\eta}^\alpha$, where $\overline{\eta}_i:=\alpha \eta_i - l'_\eta(i)$\\\hline
$^*$EOpSymJack[$f,n$] &Computes $D_2(\alpha)f$\\
$^*$EOpNSJack[$f,n,i$] &Computes $\xi_i f$\\
$^*$EOpIntJack[$f,n,i$] &Computes $\Xi^\alpha_i f$\\\hline
$^*$JInnerProduct[$f,g,n,k$]& Computes $\langle f,g\rangle_{1/k}$ for polynomials $f,g$ of $n$ variables\\\hline
\end{tabular}
\caption{Mathematica functions relating to Jack polynomials}\label{MathematicafunctionsJack}
\end{table}

\begin{table}[h!]
\begin{tabular}{ll}
 {\bf Syntax} & {\bf Description} \\\hline
 NSMac[$\eta$]& Computes $E_\eta(z;q,t)$\\
SymMac[$\kappa$]& Computes $P_\kappa(z;q,t)$\\
ASymMac[$\kappa$]& Computes $S_\kappa(z;q,t)$\\\hline
IntMac[$\eta$]& Computes $E^*_\eta(z;q,t)$\\
SymIntMac[$\kappa$]& Computes $P^*_\kappa(z;q,t)$\\
ASymIntMac[$\kappa$]& Computes $S^*_\kappa(z;q,t)$\\\hline
  MEvalue[$\eta$] & Computes $\overline{\eta}$, where $\overline{\eta}_i:=q^{\eta_i}t^{l'_\eta(i)}$\\\hline
         $^*$EOpSymMac[$f,n$] &Computes $D_n^1(q,t)f$\\
$^*$EOpNSMac[$f,i$] & Computes $Y_if$\\
$^*$EOpIntMac[$f,i$] & Computes $\Xi_if$\\\hline
Ti[$f,i$]& Computes $T_if$\\
TiInv[$f,i$]&Computes $T_i^{-1}f$\\
Phiq[$f,n$]&Computes $\Phi_qf$\\
Hi[$f,i$]&Computes $H_if$\\
PhiqInt[$f,n$]&Computes $\Phi^*_qf$\\\hline
$^*$MInnerProduct[$f,g,n,k$]& Computes $\langle f,g\rangle_{q,q^k}$, for polynomials $f,g$ of $n$ variables\\\hline
\end{tabular}
\caption{Mathematica functions relating to Macdonald polynomials}\label{MathematicafunctionsMac}
\end{table}

\begin{table}[h!]
\begin{tabular}{ll}
 {\bf Syntax} & {\bf Description} \\\hline
$^*$UPlus[$f,n$] & Computes $U^+f$, where $f$ is a function of $n$ variables\\
$^*$UMinus[$f,n$] & Computes $U^-f$, where $f$ is a function of $n$ variables\\
$^*$UPlusInt[$f,n$] & Computes $U^+_*f$, where $f$ is a function of $n$ variables\\
$^*$UMinusInt[$f,n$] & Computes $U^-_*f$, where $f$ is a function of $n$ variables\\\hline
 OIJ[$f,n,I,J$]& Computes $O_{I,J}f$, where $f$ is a function of $n$ variables\\
OIJInt[$f,n,I,J$]& Computes $O^*_{I,J}f$, where $f$ is a function of $n$ variables\\\hline
PreSymMac[$\eta^*,I,J$]& Computes $S_{\eta^*}^{(I,J)}(z;q,t)$\\
PreSymIntMac[$\eta^*,I,J$]& Computes $S_{\eta^*}^{*,(I,J)}(z;q,t)$\\\hline
aEta[$\eta,I,J$]& Computes the normalisation $a_\eta^{(I,J)}(q,t)$ \\\hline
\end{tabular}
\caption{Mathematica functions relating to prescribed symmetry polynomials}\label{MathematicafunctionsSym}
\end{table}
\textcolor{white}\\
\begin{table}[h]
\begin{tabular}{ll}
 {\bf Syntax} & {\bf Description} \\\hline
 $^*$JEta[$\eta,r$] & Produces the set of $\lambda$ such that $|\lambda|=|\eta|+r$\\and  &  $\eta\preceq'\lambda\preceq'\eta+(1^n)$\\\hline
 Pieri[$\eta,\lambda,r$] & Computes the Pieri-type coefficient $A^{(r)}_{\eta,\lambda}(q,t)$\\\hline
 \T $^*$GeneralisedBinomialCoefficient[$\lambda,\eta$] & Computes $\binom{\lambda}{\eta}_{q,t}$\\\hline
\end{tabular}
\caption{Mathematica functions relating to Pieri-type coefficients}\label{MathematicafunctionsPieri}
\end{table}

\vspace{1cm}
\begin{table}[h]
\begin{tabular}{p{14cm}}
{\small {\bf Copyleft} Copyleft 2011 Wendy Baratta}\\
{\small Permission is granted to anyone to use, modify and redistribute SpecialFunction.nb freely subject to the following.}\\
{\small$\cdot$ We make no guarantees that the software is free of defects.} \\
{\small$\cdot$ We accept no responsibility for the consequences of using this software.}\\
{\small$\cdot$ All explicit use of this notebook must be explicitly represented.}\\
{\small$\cdot$ No form of this software may be included or redistributed in a library to be sold for profit without our consent.}\end{tabular}
\end{table}

\vskip 2cm

\clearpage

\section{Further Work}\label{furtherworkMathematica}

It is hoped that the notebook SpecialFunctions.nb will continue to assist researchers with conjecture formulation and testing in areas relating to the polynomial families discussed within this paper. In fact this has already shown itself to be the case in a study of special vanishing properties of Jack polynomials for $\alpha=-(r-1)/(k+1)$ and Macdonald polynomials with $t^{k+1}q^{r-1}=1$ \cite{PFWB}. Below are some examples of computations that aim to provide motivation for two research problems stemming from the author's previous studies; Macdonald polynomials with prescribed symmetry \cite{barattapresym} and Pieri-type formulas for nonsymmetric Macdonald polynomials \cite{barattafurtherpieri}.

\subsection{Interpolation polynomials with prescribed symmetry}
In \cite{barattapresym} the author investigated properties of Macdonald polynomials with prescribed symmetry. Macdonald polynomials with prescribed symmetry, denoted $S_{\eta^*}^{(I,J)}(z;q,t)$, are  generalisations of the symmetric Macdonald polynomials and the antisymmetric Macdonald polynomials, denoted $S_{\eta^++\delta}(z;q,t)$ where $\delta=(n-1,\ldots,1,0)$, and are symmetric with respect to some variables and antisymmetric with respect to others. They are obtained from the nonsymmetric Macdonald
polynomials via symmetrisation by $O_{I,J}$ and normalisation. The subscript $I$, $J$  indicates
the sets of variables for which $O_{I,J}$ symmetrises and antisymmetrises with respect to. The operator $O_{I,J}$ is defined by
$$
O_{I,J}:=\sum_{\omega\in W_{I\cup J}}\Big(-\frac{1}{t} \Big)^{l(\omega_J)}T_\omega,
$$
where $l(\omega)$ is the length of the permutation $\omega$,
\begin{equation}\label{compositionofop}T_\omega:=T_{i_1}\ldots T_{i_l},\end{equation}
where $s_{i_1}\ldots s_{i_l}$ is a reduced decomposition of $\omega$, and $W_{I\cup J}:=\langle s_k; k\in I\cup J\rangle$, a subset of $S_n$ where each $\omega\in W_{I\cup J}$ can be decomposed as
$$
\omega=\omega_I\omega_J,\hspace{1cm} \text{with }\omega_I\in W_I \text{ and } \omega_J\in W_J.
$$

The prescribed symmetric Macdonald polynomials is obtained from the nonsymmetric Macdonald polynomials according to
$$
O_{I,J}E_\eta(z;q,t)=a_\eta^{(I,J)}S_{\eta^*}^{(I,J)}(z;q,t),
$$
where $\eta^*$ is a composition satisfying
$$
\eta_i^*\geq \eta_{i+1}^* \text{ for all }i\in I \text{ and } \eta_j^*>\eta_{j+1}^* \text{ for all } j\in J
$$ and $a_\eta^{(I,J)}$ is a normalisation that ensures the coefficient of $z^{\eta^*}:=z_1^{\eta_1^*}\ldots z_n^{\eta_n^*}$ is unity.

The theory of Macdonald polynomials with prescribed symmetry leads most naturally to an investigation of interpolation polynomials with prescribed symmetry. Since the operator $H_i$ (\ref{TiHi}) satisfies the Hecke algebra (\ref{hecke algebra}) and plays the same role in interpolation theory as $T_i$ does in nonsymmetric theory it seems natural to define the prescribed symmetry operator for the interpolation polynomials as
$$
O^*_{I,J}:=\sum_{\omega\in W_{I\cup J}}\Big(-\frac{1}{t} \Big)^{l(\omega_J)}H_\omega,
$$
where $H_\omega$ is as in (\ref{compositionofop}).

Trial computation suggest that it may be possible to extend many results obtained in \cite{barattapresym} to their interpolation polynomial analogues (see for example, Propositions 3.2.1, 3.2.2 and 3.3.1).

Specifically, Proposition 3.3.1 in \cite{barattapresym} specifies the relationship between the antisymmetric and symmetric Macdonald polynomial
$$
S_{\eta^++\delta}(z;q,t)=\Delta_t(z)P_{\eta^+}(z;q,qt),
$$
where $\Delta_t(z)$ is the $t$-Vandermonde and specified by
$$
\Delta_t(z):=\prod_{1\leq i<j \leq n}(z_i-t^{-1}z^j).
$$
This property has extensions in the theory of Macdonald polynomials with prescribed symmetry and is therefore a suitable starting point for the theory of interpolation Macdonald polynomials with prescribed symmetry.

Here we provide some explicit computations obtained using the Mathematica notebook that may assist with the identification of the relationship between the antisymmetric and symmetric interpolation polynomial.

\textcolor{white}{space}\\
\begin{tabular}{ll}
{\ttfamily In[1] }&{\ttfamily SymIntMac[$\{1,0\}$]}\\
{\ttfamily Out[1]}& \TT\BB $\dfrac{t \left(x_1+x_2\right)-t-1}{t}$ \\
{\ttfamily In[2] }&{\ttfamily ASymIntMac[$\{2,0\}$]}\\
{\ttfamily Out[2]}&  \TT\BB$\dfrac{\left(x_1-t x_2\right) \left(t \left(x_1+x_2\right)-q t-1\right)}{t}$
\end{tabular}

\textcolor{white}{space}\\
\begin{tabular}{ll}
{\ttfamily In[1] }&{\ttfamily SymIntMac[$\{1,1\}$]}\\
{\ttfamily Out[1]}& \TT\BB $ \dfrac{\left(t x_1-1\right) \left(t x_2-1\right)}{t^2}$ \\
{\ttfamily In[2] }&{\ttfamily ASymIntMac[$\{2,1\}$]}\\
{\ttfamily Out[2]}&  \TT\BB$\dfrac{ \left(x_1-t x_2\right)\left(t x_1-1\right)  \left(t x_2-1\right)}{t^2}$
\end{tabular}

\textcolor{white}{space}\\
\begin{tabular}{ll}
{\ttfamily In[1] }&{\ttfamily SymIntMac[$\{1,0,0\}$]}\\
{\ttfamily Out[1]}& \TT\BB $ \dfrac{t^2 \left(x_1+x_2+x_3\right)-t^2-t-1}{t^2}$ \\
{\ttfamily In[2] }&{\ttfamily ASymIntMac[$\{3,1,0\}$]}\\
{\ttfamily Out[2]}&  \TT\BB$\dfrac{\left(x_1-t x_2\right) \left(x_1-t x_3\right) \left(x_2-t x_3\right) \left(t^2 \left(x_1+x_2+x_3\right)-q t (q t+1)-1\right)}{t^2}$
\end{tabular}

\textcolor{white}{space}\\
These computations show a clear relationship between $S^*_{\kappa+\delta}(z)$ and $P^*_\kappa(z)$, highlighting that the difficulty in identifying the relationship lies in the transformations of the parameters. It also appears that the $t$-Vandermonde product that relates the polynomials is not of standard form.

\subsection{Pieri-type formulas for nonsymmetric Macdonald polynomials}
In \cite{barattafurtherpieri} the formulas for the coefficients in the expansion of a nonsymmetric Macdonald polynomial and an elementary symmetric function were given. Explicitly, the formulas for $A_{\eta\lambda}^{(r)}$ in
$$
e_r(z)E_\eta(z;q^{-1},t^{-1})=\sum_{\lambda:|\lambda|=|\eta|+r}A_{\eta\lambda}^{(r)}E_\lambda(z;q^{-1},t^{-1}),
$$
where
$$
e_r(z):=\sum_{1\leq i_1<\ldots <i_r\leq n}z_{i_1}\ldots z_{i_n}.
$$
The formulas of $A_{\eta\lambda}^{(r)}$ given in \cite{barattafurtherpieri} are given in summation form, though computational evidence suggests that almost all of the coefficients display a product structure. Here we present explicit formulas for the Pieri-type coefficients for the case $r=2$ showing which cases display a product structure and which cases do not.

\textcolor{white}{space}\\
\begin{tabular}{rlcrl}
{\ttfamily\small{ In[1]}\TT }&{\ttfamily \small{Pieri[}$\tiny{\{1,0,1,0\},\{2,0,2,0\},2}$\small{]}}&\textcolor{white}{ee}& {\ttfamily \small{In[2]} }&{\ttfamily \small{Pieri[}$\tiny{\{1,0,1,0\},\{2,1,1,0\},2}$\small{]}}\\
{\ttfamily \small{Out[1]}}& \TT\BB $\tiny{1}$ && {\ttfamily \small{Out[2]}}& \TT\BB $\tiny{\dfrac{q - 1} {q t - 1}}$
\end{tabular}

\textcolor{white}{space}\\
\begin{tabular}{rlcrl}
{\ttfamily\small{ In[3]}\TT }&{\ttfamily \small{Pieri[}$\tiny{\{1,0,1,0\},\{1,1,2,0\},2}$\small{]}}&\textcolor{white}{ee}& {\ttfamily \small{In[4]} }&{\ttfamily \small{Pieri[}$\tiny{\{1,0,1,0\},\{2,0,1,1\},2}$\small{]}}\\{\ttfamily \small{Out[3]}}& \TT\BB $\tiny{\dfrac{t (q - 1) \left (q t^3 - 1 \right)} {\left (q t^2 - 1 \right)^2}}$ && {\ttfamily \small{Out[4]}}& \TT\BB $\tiny{\dfrac{t(q - 1)  \left (q t^3 - 1 \right)} {\left (q t^2 - 1 \right)^2}}$
\end{tabular}

\textcolor{white}{space}\\
\begin{tabular}{rlcrl}
{\ttfamily\small{ In[5]}\TT }&{\ttfamily \small{Pieri[}$\tiny{\{1,0,1,0\},\{1,1,1,1\},2}$\small{]}}&\textcolor{white}{ee}& {\ttfamily \small{In[6]} }&{\ttfamily \small{Pieri[}$\tiny{\{1,0,1,0\},\{1,2,0,1\},2}$\small{]}}\\{\ttfamily \small{Out[5]}}& \TT\BB $\tiny{\dfrac{(q - 1) (q t - 1) \left (q t^4 -
   1 \right)} {\left (q t^2 - 1 \right)^3}}$ && {\ttfamily \small{Out[6]}}& \TT\BB $\tiny{\dfrac{q^2t^2(q - 1)  (t - 1)^4 } {(q t - 1)^3 \left (q t^2 - 1 \right)^2}}$
\end{tabular}

\textcolor{white}{space}\\
\begin{tabular}{rlcrl}
{\ttfamily\small{ In[7]}\TT }&{\ttfamily \small{Pieri[}$\tiny{\{1,0,1,0\},\{1,2,1,0\},2}$\small{]}}&\textcolor{white}{ee}& {\ttfamily \small{In[8]} }&{\ttfamily \small{Pieri[}$\tiny{\{1,0,1,0\},\{2,1,0,1\},2}$\small{]}}\\
{\ttfamily \small{Out[7]}}& \TT\BB $\scriptsize{-\dfrac{qt(q - 1)  (t - 1)^2 } {(q t - 1)^2 \left (q t^2 - 1 \right)}}$ && {\ttfamily \small{Out[8]}}& \TT\BB $\tiny{-\dfrac{qt(q - 1)  (t - 1)^2 } {(q t - 1)^2 \left (q t^2 - 1 \right)}}$
\end{tabular}

\textcolor{white}{space}\\
\begin{tabular}{rlcrl}
{\ttfamily\small{ In[9]}\TT }&{\ttfamily \small{Pieri[}$\tiny{\{1,0,1,0\},\{0,2,2,0\},2}$\small{]}}&\textcolor{white}{t}& {\ttfamily \small{In[10]} }&{\ttfamily \small{Pieri[}$\tiny{\{1,0,1,0\},\{2,0,0,2\},2}$\small{]}}\\
{\ttfamily \small{Out[9]}}& \TT\BB $\tiny{\dfrac{qt^2(q - 1)  (t - 1) } {(q t - 1) (q t + 1) \left (q t^2 -
  1 \right)}}$ && {\ttfamily \small{Out[10]}}& \TT\BB $\tiny{\dfrac{ qt^2(q - 1) (t - 1) } {(q t - 1) (q t + 1) \left (q t^2 -
    1 \right)}}$
\end{tabular}

\textcolor{white}{space}\\
\begin{tabular}{rlcrl}
{\ttfamily \small{In[11]}\TT }&{\ttfamily \small{Pieri[}$\tiny{\{1,0,1,0\},\{1,0,2,1\},2}$\small{]}}&& &\\
{\ttfamily \small{Out[11]}}& \TT\BB $\small{\dfrac{(q-1) t^2 \left(q \left(t \left(q \left((q-2) q t^5+(3 q-1) t^4+(2-3 q) t^3-2t+1\right)+3 (t-1)\right)+2\right)-1\right)}{(q t-1) \left(q t^2-1\right)^2 \left(q^2t^3-1\right)}}$ && & \\
\end{tabular}

\textcolor{white}{space}\\
In the above formulas for the Pieri-type coefficients $A^{(2)}_{(1,0,1,0),\lambda}(q,t)$ a simple product structure in $q$ and $t$, with roots in $t$ being simple fractional powers of $q$ for example, is exhibited in all cases except $\lambda=(1,0,2,1)$. Additional trial computations suggest that it is always the successor of the form $\Phi\Phi\eta$ that cannot be expressed as a simple product.

\section*{Acknowledgement}
 This work was supported by an Australian Mathematical Society Lift-off Fellowship. I would like to thank AustMS for providing the funding that allowed me to write this paper and further the work from my PhD. I would also like to thank my PhD supervisor who guided me through the research contained in this paper, and kindly agreed to proof read the work after his days as my supervisor were completed. Additional thanks goes to my friend Sam Blake who taught me how to use mathematica.

\bibliographystyle{plain}

\end{document}